\documentclass[12pt]{amsart}
\usepackage{amsmath,amssymb,amsbsy,amsfonts,latexsym,amsopn,amstext,cite,
                                               amsxtra,euscript,amscd,bm,mathabx}
\usepackage{graphicx}
\usepackage{float} 
\usepackage[english]{babel}
\usepackage{mathtools}
\usepackage{geometry}
\usepackage[colorlinks,linkcolor=blue,anchorcolor=blue,citecolor=blue,backref=page]{hyperref}

\DeclareMathAlphabet{\mathmybb}{U}{bbold}{m}{n}

\numberwithin{equation}{section}

\newtheorem{thm}{Theorem}[section]
\newtheorem{lem}[thm]{Lemma}
\newtheorem{prop}[thm]{Proposition} 
 
\newtheorem{prob}[thm]{Problem}

\newcommand{\N}{\mathbb{N}}
\newcommand{\R}{\mathbb{R}}

\begin{document}

\title[Digits of partition functions]
{On the Digits of Partition Functions}

\author[Siddharth Iyer] {Siddharth Iyer}
\address{School of Mathematics and Statistics, University of New South Wales, Sydney, NSW 2052, Australia}
\email{siddharth.iyer@unsw.edu.au}

\begin{abstract}
We study a problem of Douglass and Ono concerning the smallest integer $n$ such that the partition function $p(n)$ begins with a specified string of digits $f$ in base $b$. By employing an elementary discrepancy framework, we establish new upper bounds that significantly improve upon previous results of Luca.
\end{abstract}

\keywords{Digits, Partitions, Modulo one}
\subjclass[2020]{11A63, 11P82}

\maketitle

\section{Introduction}
For a natural number $n$, let $p(n)$ denote the number of integer partitions of $n$, and let $PL(n)$ denote the number of plane partitions of $n$. Recall that a plane partition of size $n$ is an array of non-negative integers $(\pi_{i,j})$ such that rows and columns are weakly decreasing and $\sum_{i,j} \pi_{i,j} = n$ (for more background, see \cite{Andrews}). 

Recently, it was established that the leading digits of both $p(n)$ and $PL(n)$ abide by Benford's Law \cite{Anderson, Ono}. In light of this, Douglass and Ono \cite{Ono} asked the following specific question (which we generalize here to include $p(\cdot)$):

\begin{prob}
Let $\mathbf{p} \in \{p(\cdot), PL(\cdot)\}$. For each string $f$ in base $b$ (not beginning with the digit $0$), let $N_{\mathbf{p}}(f,b)$ be the smallest positive integer with the property that $\mathbf{p}(N_{\mathbf{p}}(f,b))$ begins with the string $f$ in base $b$. Find non-trivial upper bounds for $N_{\mathbf{p}}(f,b)$.
\end{prob}

Using exponential sums and transcendence theory, Luca \cite{LucaPart2} proved that $N_{p}(f,b) \leq \exp(2\cdot 10^{25}(t+12)(\ln b)^2)$, where $t$ is the number of base $b$ digits of $f$. Similarly, using properties of prime numbers, he proved that $N_{PL}(f,b) \leq b^{51t+688}$ \cite{LucaPart1}. 

In this paper, we demonstrate that an elementary approach utilizing the mean value theorem and the fractional parts of logarithms yields drastically sharper bounds.

\begin{thm}
\label{PartitionDigitthm}
Let $b \geq 2$ be an integer base and let $t$ be the length of the digit string $f$. We have
\begin{equation*}
N_{p}(f,b) \leq 288 \cdot b^{2t}+2,
\end{equation*}
and
\begin{equation*}
N_{PL}(f,b) \leq 130 \cdot b^{3t} + 29400 \cdot b^{3t/2}.
\end{equation*}
\end{thm}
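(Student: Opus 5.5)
We work with the fractional parts $\{\log_b \mathbf{p}(n)\}$. Saying that $\mathbf{p}(n)$ begins with the string $f$ (read as an integer $F$ with $b^{t-1}\le F<b^t$) is the same as saying
\[
\{\log_b \mathbf{p}(n)\}\in I_f:=\bigl[\log_b F-(t-1),\ \log_b(F+1)-(t-1)\bigr).
\]
The length of this interval is $\log_b(1+1/F)\ge \frac{1}{F\ln b}\cdot\frac{1}{1+1/F}$, which is at least of order $b^{-t}/\ln b$. So it suffices to show one thing: once $n$ passes the claimed bound, the sequence $\log_b\mathbf{p}(n)$ moves through a full unit interval with consecutive steps smaller than $|I_f|$. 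It then cannot jump over $I_f$ modulo one. This is the elementary "discrepancy" mechanism. If $x_n$ is increasing, with $x_{m+1}-x_m<\delta$ for all $m$ in a range over which $x$ increases by at least $1$, then every interval of length $\ge\delta$ mod $1$ is hit.

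\textbf{Step 1 (explicit asymptotics).} For $p(n)$ I will use an explicit Hardy–Ramanujan–Rademacher type estimate with an effective error. The form I want is $\ln p(n)=\pi\sqrt{2n/3}-\ln(4\sqrt3 n)+E(n)$, with $|E(n)|\le C/\sqrt n$ for an explicit $C$ and all $n\ge 1$ (or $n\ge 2$). For $PL(n)$ I will use Wright's asymptotic $\ln PL(n)=c_1 n^{2/3}-\tfrac{25}{36}\ln n+c_0+E(n)$, with $c_1=\tfrac32\zeta(3)^{1/3}$ and an explicit error $|E(n)|\le C'n^{-2/3}$. I expect the paper either to quote these from the literature or to have proved them earlier; I assume effective versions are available.

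\textbf{Step 2 (step sizes by the mean value theorem).} Write $g(x)=\pi\sqrt{2x/3}-\ln(4\sqrt3x)$. Then
\[
\ln p(n+1)-\ln p(n)=g'(\xi)+E(n+1)-E(n),
\]
and $g'(\xi)\approx \pi/\sqrt{6\xi}$. Hence the increment of $\log_b p(n)$ is at most about $\frac{\pi}{\sqrt{6n}\ln b}+\frac{2C}{\sqrt n\ln b}$. It is also positive, since $p$ is nondecreasing. Requiring this to be smaller than $|I_f|\gtrsim b^{-t}/(2\ln b)$ forces $\sqrt n\gtrsim c\, b^{t}$, i.e.\ $n\gtrsim c^2 b^{2t}$. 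The $\ln b$ factors cancel, which is why the final constant $288$ does not depend on $b$. For $PL$, $g'(x)\approx \tfrac23c_1x^{-1/3}$, so we need $n^{1/3}\gtrsim b^t$, i.e.\ $n\asymp b^{3t}$. The lower-order term $29400\,b^{3t/2}$ should come from absorbing the error term and the $\ln n$ correction in a threshold inequality of the form $n^{1/3}\ge A b^t+B$. Cubing that threshold, or handling the $n^{-2/3}$ error via $\sqrt{\cdot}$ bounds, produces the extra term.

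\textbf{Step 3 (covering a full period).} Let $n_0$ be the threshold from Step 2. Starting at $n_0$, I need the sequence $\log_b\mathbf{p}(n)$ to increase by at least $1$ before reaching the final bound. For $p$: we have $\log_b p(n)\approx \pi\sqrt{2n/3}/\ln b$, and increasing $\sqrt n$ by about $\ln b/2.56$ suffices. Starting from $\sqrt{n_0}\asymp b^t$, the range $n\in[n_0,n_0+O(b^t\ln b)]$ works. This is absorbed into $288b^{2t}+2$ after choosing constants carefully, e.g.\ by taking $n_0$ slightly below the bound and using $b^t\ln b\ll b^{2t}$. The $+2$ handles small-$n$ validity of the error bound. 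The $PL$ case is analogous. Along the way, every $n$ in this window has step below $|I_f|$, so some $n$ in it lands in $I_f$ mod $1$, giving $N_{\mathbf{p}}(f,b)\le$ the window's right end. The main obstacle is Step 1–2 bookkeeping. I need effective, uniform error bounds on the asymptotics, and I have to ensure that the error difference $E(n+1)-E(n)$ is dominated at the threshold. The crude bound $2C/\sqrt n$ is the same order as the main term, so the explicit constant $C$ directly determines $288$ and $130$. If the crude bound is too lossy, I would instead bound $E$'s variation by differentiating the next Rademacher term, or accept the larger constant.
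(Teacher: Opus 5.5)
Your proposal is correct, but it reaches the bound by a different mechanism than the paper.

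\textbf{The paper's route.} The paper never controls consecutive increments of $\log_b\mathbf{p}(n)$. It proves a general proposition for $g=h+E$ with $h(x)=c_1x^{\theta}+c_2\ln x+c_3$. It finds a window $[x,x+Dx^{1-\theta}]\subseteq[x,2x]$ on which the smooth part $h$ increases by $4/3$. It then uses $h'\le c_1\theta x^{\theta-1}$ to show that the $h$-preimage of the middle third of the target interval has length at least $1$, so it contains an integer $m$. Finally it absorbs the error by the absolute condition $|E(m)|<\delta/3$.

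\textbf{Your route.} Your no-skipping argument runs on the true sequence, so the error enters only through $|E(n+1)-E(n)|\le 2\sup|E|$. Use the same inputs the paper quotes from Luca: $|E|\le 4n^{-1/2}$ for $n\ge 4$ and $|E|\le 200n^{-2/3}$ for $n\ge 2829$, on the $\ln$ scale. Your step condition then becomes $(\pi/\sqrt6+8)n^{-1/2}<b^{-t}$, respectively $\tfrac23c_1n^{-1/3}+400n^{-2/3}<b^{-t}$. The window needed to gain $1$ costs only an additive $O(b^t\ln b)$. This gives bounds below $100\,b^{2t}$ and roughly $20\,b^{3t}+2.3\cdot 10^{4}\,b^{3t/2}$, well inside the theorem.

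\textbf{What each buys.} Your gain comes from two places: you use the full $\delta$ rather than splitting it into thirds, and you replace the paper's multiplicative loss $m\le 2x$ by an additive window. The paper's formulation buys a single reusable proposition with explicit thresholds $L_1,\dots,L_4$, in which the error is handled by one absolute bound rather than through increments.

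\textbf{Cosmetic corrections.} Wright's constant is $c_1=3(\zeta(3)/4)^{1/3}\approx 2.01$, not $\tfrac32\zeta(3)^{1/3}$. Also, in the paper the terms $29400\,b^{3t/2}$ and $+2$ come from the entry $L_2+1$ in $2\max\{K,L_1,L_2+1,L_3,L_4\}$, with $L_2=(3c_4/\delta)^{1/\theta}$. That is, they come from the absolute error requirement, not from cubing a threshold or from small-$n$ validity.
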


\section{Preliminaries}

For an integer $b \geq 2$ and $t \in \N$ (with $t \geq 2$ if $b = 2$), let $\mathcal{D}_{b,t}$ denote the set of integers of the form $\sum_{d=0}^{t-1}a_{d} b^{d}$, where $a_{i} \in \{0,\ldots,b-1\}$ and $a_{t-1} \neq 0$. Note that $f \in \mathcal{D}_{b,t}$ implies $b^{t-1} \le f \le b^t - 1$. 

Define the function $C_{b,t}: \{b^{t-1}, b^{t-1}+1,\ldots\} \rightarrow \mathcal{D}_{b,t}$ as follows: if $n = \sum_{d=0}^{z+t-1}a_{d} b^{d}$ for some integer $z \geq 0$ and $a_{z+t-1} \neq 0$, then 
\begin{equation*}
C_{b,t}(n) = \sum_{d=0}^{t-1}a_{d+z} b^{d}.
\end{equation*}
In other words, $C_{b,t}(n)$ extracts the leading $t$ digits of $n$ in base $b$. Let $\{x\}$ denote the fractional part of $x$.

\begin{lem}
\label{logdigitmod1}
Suppose that $n \geq b^{t-1}$. If $f \in \mathcal{D}_{b,t}$, then $C_{b,t}(n) = f$ if and only if 
\begin{equation*}
\{\log_{b}(n)\} \in [\log_{b}(f)-t+1, \log_{b}(f+1)-t+1).
\end{equation*}
\end{lem}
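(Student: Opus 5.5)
The plan is to reduce the statement to a chain of equivalent inequalities by isolating the leading block of digits. Write $n = \sum_{d=0}^{z+t-1} a_d b^d$ with $a_{z+t-1}\neq 0$ and $z \ge 0$; this is possible because $n \ge b^{t-1}$. Then $b^{z+t-1} \le n < b^{z+t}$, so $\lfloor \log_b n\rfloor = z+t-1$ and
\begin{equation*}
\{\log_b n\} = \log_b n - (z+t-1).
\end{equation*}
Hence the condition in the lemma is equivalent to $\log_b f \le \log_b n - z < \log_b(f+1)$, that is, to
\begin{equation*}
f\, b^{z} \le n < (f+1)\, b^{z}.
\end{equation*}
The key step is then to show that this last inequality is equivalent to $C_{b,t}(n) = f$ for $f \in \mathcal{D}_{b,t}$.

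By definition $C_{b,t}(n) = \lfloor n / b^{z}\rfloor$. Indeed, $n = C_{b,t}(n)\, b^{z} + r$ with $r = \sum_{d<z} a_d b^d$ satisfying $0 \le r < b^{z}$. Therefore $C_{b,t}(n) = f$ if and only if $f b^{z} \le n < (f+1) b^{z}$, which closes the chain of equivalences.

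The only subtle point is that $z$ is defined from $n$ alone, independently of $f$. The equivalence $\lfloor n/b^z\rfloor = f \iff f b^z \le n < (f+1)b^z$ holds for any integer $f$, so no compatibility between $z$ and $f$ needs to be checked. The hypothesis $f \in \mathcal{D}_{b,t}$ (so $b^{t-1}\le f\le b^t-1$) only guarantees that the interval $[\log_b f - t+1,\, \log_b(f+1) - t+1)$ lies inside $[0,1]$. This is consistent with comparing it to a fractional part, but it is not needed for the logic.

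I expect no real obstacle here. The main care needed is the bookkeeping of the exponent $z+t-1$ when passing from $\{\log_b n\}$ to $\log_b n$.
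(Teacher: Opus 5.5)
Your proof is correct and follows the same route as the paper: both reduce $C_{b,t}(n)=f$ to $fb^{z}\le n<(f+1)b^{z}$ and then take base-$b$ logarithms with $z=\lfloor\log_b n\rfloor-t+1$. Your version fixes $z$ from the digit expansion of $n$ at the outset, via $C_{b,t}(n)=\lfloor n/b^{z}\rfloor$. This makes the paper's ``for some integer $z$'' phrasing and its appeal to $\log_b f\in[t-1,t)$ unnecessary, as you correctly observe.
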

\begin{proof}
Note that $C_{b,t}(n) = f$ if and only if $n \in [fb^{z},(f+1)b^z)$ for some integer $z \geq 0$. Taking the base $b$ logarithm of this interval yields $z + \log_b(f) \le \log_b(n) < z + \log_b(f+1)$. Because $f \in \mathcal{D}_{b,t}$, we know $\log_b(f)$ is between $t-1$ and $t$. Setting $z = \lfloor \log_b(n) \rfloor - t + 1$ yields the required interval for the fractional part.
\end{proof}

For subsequent error bounds, we rely on the elementary inequality
\begin{equation}
\label{log1+xlemma}
|\ln(1+y)| \leq 2|y| \quad \text{for } |y| \leq 1/2.
\end{equation}

\begin{lem}
\label{logbpartitionfunc}
For $n \geq  4$, we have the inequality
\begin{equation*}
\left|\log_{b}(p(n)) - \left(\frac{\pi \sqrt{24}}{6 \ln b} \sqrt{n}-\frac{\ln n}{\ln b}+\log_{b}\left(\frac{\sqrt{3}}{12}\right) \right)\right| \leq \frac{4}{n^{1/2}\ln b}.
\end{equation*}
\end{lem}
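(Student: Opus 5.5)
The plan is to derive the estimate from the first term of Rademacher's convergent series for $p(n)$, together with an explicit bound on the tail (Lehmer's bound, or the form used by DeSalvo and Pak). Then take logarithms and control each correction term with the elementary inequality \eqref{log1+xlemma}. First I check that the main term in the statement is the Hardy--Ramanujan term: $\frac{\pi\sqrt{24}}{6} = \pi\sqrt{2/3}$ and $\frac{\sqrt3}{12} = \frac{1}{4\sqrt3}$. Multiplying through by $\ln b$, the claim is therefore equivalent to
\begin{equation*}
\left| \ln p(n) - \pi\sqrt{2n/3} + \ln(4\sqrt3\, n) \right| \le \frac{4}{\sqrt n}, \qquad n \ge 4 .
\end{equation*}

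Write $\mu = \mu(n) = \frac{\pi}{6}\sqrt{24n-1}$. The $k=1$ term of Rademacher's series, combined with an explicit remainder estimate, gives
\begin{equation*}
p(n) = \frac{\sqrt{12}}{24n-1}\left(1 - \frac{1}{\mu}\right) e^{\mu}\,(1+E(n)),
\end{equation*}
where $|E(n)|$ is exponentially small, roughly of size $e^{-\mu/2}$ times a polynomial factor. Taking logarithms, $\ln p(n) - \pi\sqrt{2n/3} + \ln(4\sqrt3\,n)$ splits into four pieces, and I bound each separately.

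\begin{enumerate}
\item $\mu - \pi\sqrt{2n/3}$. By the mean value theorem applied to $\sqrt{\cdot}$, its absolute value is at most $\frac{\pi}{6}\cdot\frac{1}{2\sqrt{24n-1}}$, which is $O(1/\sqrt n)$ with a small constant (about $0.05/\sqrt n$).
\item $\ln\frac{\sqrt{12}\cdot 4\sqrt3\, n}{24n-1} = -\ln\left(1 - \frac{1}{24n}\right)$. By \eqref{log1+xlemma} this is at most $\frac{1}{12n}$.
\item $\ln(1 - 1/\mu)$. Since $\mu \ge 2$ for $n \ge 4$, \eqref{log1+xlemma} gives a bound of $2/\mu$. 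As $\mu \approx 2.565\sqrt n$, this is about $0.78/\sqrt n$.
\item $\ln(1+E(n))$. This is at most $2|E(n)|$, which is far below $1/\sqrt n$ once $n$ is moderately large.
\end{enumerate}

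Summing the four pieces gives a total comfortably below $4/\sqrt n$ for all $n$ beyond some explicit threshold $n_0$. Dividing by $\ln b$ then yields the lemma for $n \ge n_0$.

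The main obstacle is the explicit remainder term. I need a clean citable bound for $|E(n)|$, for instance Lehmer's estimate for the tail of Rademacher's series, and I must check that it is at most $1/2$ (so that \eqref{log1+xlemma} applies) and genuinely small for all $n$ down to a modest threshold. If the cited bound only becomes effective for large $n$, I would finish by directly computing $p(n)$ for $4 \le n < n_0$ and verifying the inequality numerically. I expect some slack here, since the constant $4$ is generous compared with the roughly $0.85$ produced by the analysis above.
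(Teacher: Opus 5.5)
Your argument is correct, but it takes a different, more self-contained route than the paper. The paper never works with Rademacher's series directly. It quotes Luca's Lemma~1, the ready-made envelope $\frac{\sqrt3}{12n}\bigl(1-n^{-1/2}\bigr)e^{\mu(n)}<p(n)<\frac{\sqrt3}{12n}\bigl(1+n^{-1/2}\bigr)e^{\mu(n)}$ for $n\ge 4$. It then needs only $|\ln(1\pm n^{-1/2})|\le 2n^{-1/2}$, which is legitimate because $n^{-1/2}\le 1/2$, together with the deliberately crude bound $0<\frac{\pi\sqrt{24}}{6}\sqrt n-\mu(n)\le 2/\sqrt n$, so its constant $4$ is just $2+2$.

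You instead start from the exact first Rademacher term and split the error into $\mu-\pi\sqrt{2n/3}$, $-\ln\bigl(1-\frac{1}{24n}\bigr)$, $\ln(1-1/\mu)$ and $\ln(1+E(n))$; in effect you re-derive the envelope the paper cites. What your route buys is a much better constant, roughly $0.9/\sqrt n$ plus an exponentially small term, and it shows that the genuine $n^{-1/2}$-size error comes from the factor $1-1/\mu$. What the paper's route buys is brevity: its one non-elementary input, an explicit tail bound valid all the way down to $n=4$, is outsourced to the citation. You must supply that bound yourself, or supply it beyond some explicit $n_0$ and check $4\le n<n_0$ by computing $p(n)$. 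That finish is valid given the slack between $4$ and your constant of about $0.9$.

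One caution for that step: make sure the tail estimate you cite really covers the $k=2$ term, which has $|A_2(n)|=1$ and size of order $e^{\mu/2}/\mu^2$. In the standard form $p(n)=\sum_{k=1}^{N}(\cdots)+R_2(n,N)$, Lehmer's bound controls the remainder after the first $N$ terms. The safe choice is therefore $N=2$, with the explicit $k=2$ term added into $E(n)$ by hand; this still leaves $|E(n)|$ exponentially small in $\mu$.
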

\begin{proof}
Let $\mu(n) = \frac{\pi}{6}\sqrt{24n-1}$. According to \cite[Lemma 1]{LucaPart2}, for $n \geq 4$, we have
\begin{equation*}
\mu(n) + \ln\left(\frac{\sqrt{3}}{12n} -\frac{\sqrt{3}}{12n^{3/2}}\right) < \ln(p(n)) < \mu(n) + \ln\left(\frac{\sqrt{3}}{12n} +\frac{\sqrt{3}}{12n^{3/2}}\right).
\end{equation*}
Observe that $\ln\left(\frac{\sqrt{3}}{12n} \pm \frac{\sqrt{3}}{12n^{3/2}}\right) = \ln\left(\frac{\sqrt{3}}{12n}\right) + \ln\left(1\pm\frac{1}{n^{1/2}}\right)$. By inequality \eqref{log1+xlemma}, we have $|\ln(1\pm n^{-1/2})| \leq 2 n^{-1/2}$. Furthermore, note that 
\begin{equation*}
\frac{\pi \sqrt{24}}{6} \sqrt{n} - \frac{\pi}{6(\sqrt{24 n}+ \sqrt{24 n-1})} = \mu(n) < \frac{\pi \sqrt{24}}{6} \sqrt{n}.
\end{equation*}
This yields the estimate $\left|\mu(n) - \frac{\pi \sqrt{24}}{6} \sqrt{n}\right| \leq \frac{2}{\sqrt{n}}$. Combining these errors, we obtain:
\begin{equation*}
\left|\ln(p(n)) - \left(\frac{\pi \sqrt{24}}{6} \sqrt{n}-\ln(n)+\ln\left(\frac{\sqrt{3}}{12}\right) \right)\right| \leq \frac{4}{n^{1/2}}.
\end{equation*}
The lemma follows by dividing this inequality by $\ln b$.
\end{proof}

\begin{lem}
\label{logbplanepartitionalmeqn}
Let $A = \zeta(3)\approx 1.202$, $c = \zeta'(-1)\approx -0.165$, and $B = \frac{2^{25/26}e^{c}A^{7/26}}{\sqrt{12\pi}}$. For $n \geq 2829$, we have
\begin{equation*}
\left|\log_{b}(PL(n)) - \left(\frac{3(A/4)^{1/3}}{\ln b}n^{2/3}-\frac{25}{36 \ln b}\ln n + \log_{b}(B)\right)\right| \leq \frac{200}{n^{2/3}\ln b}.
\end{equation*}
\end{lem}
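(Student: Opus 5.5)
Mirroring the proof of Lemma \ref{logbpartitionfunc}, I will start from an explicit two-sided asymptotic for $PL(n)$ with an effective error term, take logarithms, and use inequality \eqref{log1+xlemma} to absorb the multiplicative error into an additive one. The classical asymptotic (Wright) is
\begin{equation*}
PL(n) \sim \frac{\zeta(3)^{7/36}}{2^{11/36}\sqrt{3\pi}}\, e^{\zeta'(-1)}\, n^{-25/36} \exp\!\left(3\left(\tfrac{\zeta(3)}{4}\right)^{1/3} n^{2/3}\right).
\end{equation*}
I will first check that the leading constant coincides with $B$ as defined in the statement. I will then rely on an explicit version, as used by Luca \cite{LucaPart1} or in the Benford paper of Douglass and Ono \cite{Ono}: for $n$ beyond an explicit threshold (here $n\ge 2829$),
\begin{equation*}
PL(n) = B\, n^{-25/36} \exp\!\left(3(A/4)^{1/3} n^{2/3}\right)\bigl(1+E(n)\bigr), \qquad |E(n)| \le \frac{100}{n^{2/3}}.
\end{equation*}
I expect locating or deriving this effective bound to be the main obstacle. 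If it is not available in a citable form, I would rederive it from the saddle-point analysis of $\prod_{k}(1-q^k)^{-k}$, which would be long. Most likely the threshold $2829$ is exactly where a cited bound becomes valid, or where $100 n^{-2/3}\le 1/2$ holds comfortably.

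Given that bound, taking natural logarithms yields
\begin{equation*}
\ln PL(n) = 3(A/4)^{1/3}n^{2/3} - \tfrac{25}{36}\ln n + \ln B + \ln(1+E(n)).
\end{equation*}
For $n\ge 2829$ we have $n^{2/3} > 200$, so $|E(n)| \le 1/2$. Inequality \eqref{log1+xlemma} then gives $|\ln(1+E(n))| \le 2|E(n)| \le 200/n^{2/3}$.

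Finally, dividing through by $\ln b$ gives exactly the inequality claimed in Lemma \ref{logbplanepartitionalmeqn}. If the cited bound instead has an error of the form $C n^{-2/3}$ with some other constant $C$, or an error of a different shape, I would adjust accordingly: bound the error crudely by $100 n^{-2/3}$ for $n\ge 2829$, then apply \eqref{log1+xlemma} as above.
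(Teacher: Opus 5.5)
Your proof is the paper's proof: it cites \cite[Lemma 1]{LucaPart1}, which gives a relative error below $100n^{-2/3}$ already for $n\ge 1001$, so $2829$ (just above $200^{3/2}\approx 2828.4$) is used only to ensure $100n^{-2/3}\le 1/2$ before applying \eqref{log1+xlemma} and dividing by $\ln b$. Your planned constant check is worth carrying out, because $2^{25/36}/\sqrt{12\pi}=2^{-11/36}/\sqrt{3\pi}$ shows that Wright's constant equals $B$ only with exponents $25/36$ and $7/36$; the printed $25/26$ and $7/26$ are a typo, and with them the left-hand side tends to about $0.2/\ln b$, so the stated inequality would fail for large $n$.
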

\begin{proof}
Let $\mu(n) = 3(A/4)^{1/3}n^{2/3}$. According to \cite[Lemma 1]{LucaPart1}, when $n \geq 1001$, we have
\begin{equation*}
\ln\left(\frac{B}{n^{25/36}}e^{\mu(n)}\right)+\ln\left(1-\frac{100}{n^{2/3}}\right)<\ln(PL(n)) <\ln\left(\frac{B}{n^{25/36}}e^{\mu(n)}\right)+\ln\left(1+\frac{100}{n^{2/3}}\right).
\end{equation*}
By applying \eqref{log1+xlemma}, for $n \geq 2829$ (which ensures $100n^{-2/3} \le 1/2$), we obtain:
\begin{equation*}
\left|\ln(PL(n)) - \left(3(A/4)^{1/3}n^{2/3}-\frac{25}{36}\ln n + \ln B\right)\right| \leq \frac{200}{n^{2/3}}.
\end{equation*}
Dividing this by $\ln b$ completes the proof.
\end{proof}

\section{A General Discrepancy Bound}

To bound $N_{\mathbf{p}}(f,b)$, we formalize a framework for finding the smallest integer within a specific interval whose fractional part lands in a target range $[a, a+\delta)$. 

\begin{prop}
\label{prop:framework}
Let $g: [K, \infty) \rightarrow \R$ be a function of the form 
\begin{equation*}
g(x) = h(x) + E(x) = c_{1}x^{\theta}+ c_{2}\ln x+c_{3}+E(x),
\end{equation*}
where $c_{1} > 0$, $c_{2} \leq 0$, $c_{3}\in \R$, $0<\theta<1$, and $|E(x)| \leq c_{4}x^{-\theta}$ for some constant $c_{4}>0$. Let $\delta > 0$ and $[a, a+\delta) \subseteq [0,1)$. Define the following constants:
\begin{align*}
L_1 &= \left(\frac{-3c_{2}}{c_{1}\theta}\right)^{1/\theta}, & L_2 &= \left(\frac{3 c_{4}}{\delta}\right)^{1/\theta}, & L_3 &= \left(\frac{2}{c_1 \theta 2^{\theta-1}}\right)^{1/\theta}, \\
L_4 &= \left(\frac{3 c_{1} \theta}{\delta} \right)^{\frac{1}{1-\theta}}, & D &= \frac{2}{c_{1}\theta 2^{\theta-1}}. & &
\end{align*}
Then, there exists an integer $m$ satisfying $\{g(m)\} \in [a,a+\delta)$ such that
\begin{equation*}
m \leq 2\max\{K, L_{1}, L_{2}+1, L_{3}, L_{4}\}.
\end{equation*}
\end{prop}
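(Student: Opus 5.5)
The plan is to work on the dyadic window $[M,2M]$ with $M=\max\{K,L_1,L_2+1,L_3,L_4\}$. On this window, the main term $h$ will be shown to increase slowly enough that successive integer values of $h$ cannot jump over a short target interval. At the same time $h$ increases by more than one period in total, so its fractional part must pass through the middle third $[a+\delta/3,a+2\delta/3)$ of the target. The error $E$ is then small enough that it cannot push $g(m)$ out of $[a,a+\delta)$. The mean value theorem does all the work, and no earlier lemma beyond elementary calculus is needed.

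\textbf{Step 1 (derivative bounds).} We have $h'(x)=c_1\theta x^{\theta-1}+c_2/x$. Since $c_2\le 0$, we get $h'(x)\le c_1\theta x^{\theta-1}$. For $x\ge L_4$ this gives $h'(x)\le c_1\theta L_4^{\theta-1}=\delta/3$. For the lower bound, $-c_2/x\le \tfrac13 c_1\theta x^{\theta-1}$ is equivalent to $x^\theta\ge -3c_2/(c_1\theta)$, which holds for $x\ge L_1$. Hence on $[M,2M]$ we have
\begin{equation*}
\tfrac23 c_1\theta x^{\theta-1}\le h'(x)\le \delta/3 .
\end{equation*}
In particular $h$ is increasing there, and by the mean value theorem $0< h(n+1)-h(n)\le\delta/3$ for consecutive integers in the window.

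\textbf{Step 2 (total increase).} By the mean value theorem and $x^{\theta-1}\ge(2M)^{\theta-1}$ on $[M,2M]$,
\begin{equation*}
h(2M)-h(M)\ge M\cdot\tfrac23 c_1\theta(2M)^{\theta-1}=\tfrac23 c_1\theta 2^{\theta-1}M^\theta .
\end{equation*}
Since $M\ge L_3$, we have $M^\theta\ge D$, so this is at least $4/3$. Set $N_0=\lceil M\rceil$ and $N_1=\lfloor 2M\rfloor$. Each endpoint loses less than one unit of length, and so at most $\delta/3$ of $h$-increase by Step 1. Therefore
\begin{equation*}
h(N_1)-h(N_0)\ge \tfrac43-\tfrac{2\delta}3\ge 1-\tfrac\delta3,
\end{equation*}
using $\delta\le 1$. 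This endpoint bookkeeping is the step I expect to be the most delicate, since the margin is exactly what the constant in $L_3$ provides.

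\textbf{Step 3 (hitting the target).} The set $S=\bigcup_{k\in\mathbb Z}[k+a+\delta/3,\,k+a+2\delta/3)$ consists of intervals of length $\delta/3$ separated by gaps of length $1-\delta/3$. The closed interval $[h(N_0),h(N_1)]$ has length at least $1-\delta/3$, so it meets $S$. Because the increments $h(n+1)-h(n)$ are at most the length $\delta/3$ of each component, the increasing sequence $h(N_0),h(N_0+1),\dots,h(N_1)$ cannot skip a component it sweeps across. Hence some integer $m\in[N_0,N_1]$ has $h(m)\in S$.

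\textbf{Step 4 (the error term).} Since $m\ge M\ge L_2+1>L_2$, we have $|E(m)|\le c_4m^{-\theta}<c_4L_2^{-\theta}=\delta/3$. Therefore $g(m)\in\bigcup_k[k+a,k+a+\delta)$, that is, $\{g(m)\}\in[a,a+\delta)$. Finally $m\le 2M$, which is the claimed bound.
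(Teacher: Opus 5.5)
Your proof is correct and is essentially the paper's argument. It uses the same derivative bounds from $L_1$ and $L_4$, a total increase of at least $4/3$ guaranteed by $L_3$, the middle third $[a+\delta/3,a+2\delta/3)$ of the target, and $L_2$ to absorb $E$. The only difference is cosmetic: you work on all of $[M,2M]$ with a discrete ``increments of at most $\delta/3$ cannot skip a component'' argument, which forces the endpoint bookkeeping, whereas the paper works on $[x,x+Dx^{1-\theta}]$ and finds a real interval $[x_1,x_2]$ of length at least $1$ mapped onto a length-$\delta/3$ target, which avoids rounding the endpoints.
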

\begin{proof}
By differentiating $h(x)$, we obtain $h'(x) = c_{1}\theta x^{\theta-1} + c_{2}/x$. For $x \geq L_{1}$, it is straightforward to check that $h'(x) \geq \frac{2}{3}c_{1}\theta x^{\theta-1}$. Furthermore, for $x \geq 1$, we clearly have $h'(x) \leq c_{1}\theta x^{\theta-1}$ since $c_2 \leq 0$. 

Notice that when $x \geq L_{3} = D^{1/\theta}$, we have $Dx^{1-\theta} \leq x$, which implies the interval $[x, x+Dx^{1-\theta}]$ is contained within $[x, 2x]$. For $x \geq \max\{L_1, L_3\}$ we have
\begin{align*}
h(x+Dx^{1-\theta}) - h(x) &= \int_{x}^{x+Dx^{1-\theta}} h'(y) \, dy \\
&\geq \frac{2}{3} \int_x^{x+Dx^{1-\theta}} c_1 \theta y^{\theta-1} \, dy \\
&= \frac{2}{3} c_1 \left( (x+Dx^{1-\theta})^\theta - x^\theta \right).
\end{align*}
Applying the Mean Value Theorem, the difference $(x+Dx^{1-\theta})^\theta - x^\theta$ evaluates to $\theta C^{\theta-1} Dx^{1-\theta}$ for some $C \in [x, 2x]$. Since $\theta - 1 < 0$, the minimum is attained at $C = 2x$. Thus:
\begin{equation*}
h(x+Dx^{1-\theta}) - h(x) \geq \frac{2}{3} c_1 \theta (2x)^{\theta-1} D x^{1-\theta} = \frac{2}{3} c_1 \theta 2^{\theta-1} D = \frac{4}{3}.
\end{equation*}
Because $4/3 > 1 + \delta/3$, the interval $[h(x), h(x+Dx^{1-\theta})]$ is sufficiently large to choose values $y_{1} < y_{2}$ such that $y_1 \equiv a + \delta/3 \pmod 1$ and $y_{2} = y_{1} + \delta/3$. By continuity, there exist $x_1 < x_2$ in $[x, x+Dx^{1-\theta}]$ such that $h(x_i) = y_i$. 

Using the Mean Value Theorem once more, $y_2 - y_1 = h'(c)(x_2 - x_1)$ for some $c \in [x_1, x_2]$. Because $h'(c) \leq c_1 \theta c^{\theta-1} \leq c_1 \theta x^{\theta-1}$, we find:
\begin{equation*}
x_{2}-x_{1} \geq \frac{\delta/3}{c_1 \theta x^{\theta-1}} = \frac{\delta}{3 c_{1}\theta}x^{1-\theta}.
\end{equation*}
If we require $x \geq L_{4}$, then $x_{2}-x_{1} \geq 1$. Consequently, the interval $[x_{1},x_{2}]$ must contain an integer $m$. Because $h(m) \in [y_1, y_2]$, we know $\{h(m)\} \in [a+\delta/3, a+2\delta/3]$. Finally, if $x > L_2$, we guarantee that the error term satisfies $|E(m)| \leq c_4 x^{-\theta} < \delta/3$. Therefore, $\{g(m)\} = \{h(m) + E(m)\} \in (a, a+\delta)$.

Taking $x = \max\{K, L_1, L_2+1, L_3, L_4\}$, the integer $m$ constructed above satisfies $m \in [x,x+Dx^{1-\theta}]$, and thus $m \leq 2x$.
\end{proof}

\section{Proof of Theorem \ref{PartitionDigitthm}}

To prove Theorem \ref{PartitionDigitthm}, we apply Proposition \ref{prop:framework} in conjunction with Lemma \ref{logdigitmod1}. For a target string $f \in \mathcal{D}_{b,t}$, the required fractional part interval has width $\delta = \log_b(f+1) - \log_b(f) = \frac{\ln(1+1/f)}{\ln b}$. 

Using the standard inequality $\ln(1+y) \geq \frac{y}{1+y}$ for $y > 0$, and noting that the maximum value of $f \in \mathcal{D}_{b,t}$ is $b^t - 1$, we obtain the lower bound:
\begin{equation}
\label{eq:delta_bound}
\delta \geq \frac{1}{(f+1)\ln b} \geq \frac{b^{-t}}{\ln b}.
\end{equation}
Because the upper bound for $m$ in Proposition \ref{prop:framework} is non-decreasing as $\delta$ decreases, we substitute $\delta = \frac{b^{-t}}{\ln b}$ into our bounds to isolate the worst-case scenario.

\subsection{Bounds for the Partition Function $p(n)$}
Applying Lemma \ref{logbpartitionfunc}, we have $\theta = 1/2$, $c_{1} = \frac{\pi \sqrt{24}}{6\ln b}$, $c_{2} = \frac{-1}{\ln b}$, and $c_{4} = \frac{4}{\ln b}$ for $K=4$. We compute the constants from Proposition \ref{prop:framework}:
\begin{align*}
L_{1} &= \left(\frac{36}{\pi\sqrt{24}}\right)^{2} = \frac{54}{\pi^2} \approx 5.47, \\
L_{2} &= \left(\frac{12/\ln b}{b^{-t}/\ln b}\right)^2 = 144 b^{2t}, \\
L_{3} &= \left(\frac{4 \sqrt{3}}{\pi} \ln b\right)^2 = \frac{48}{\pi^2}(\ln b)^2, \\
L_{4} &= \left(\frac{3 (\pi \sqrt{24} / 12 \ln b)}{b^{-t}/\ln b}\right)^2 = \frac{3\pi^2}{2} b^{2t} \approx 14.8 b^{2t}.
\end{align*}
Since $t \ge 1$ and $b \ge 2$, the maximum over all constants is solidly dominated by $L_2$. Thus, $N_p(f,b) \leq 2(144 b^{2t}+1) = 288 b^{2t}+2$.

\subsection{Bounds for the Plane Partition Function $PL(n)$}
Applying Lemma \ref{logbplanepartitionalmeqn}, we have $\theta = 2/3$, $c_1 = \frac{3(A/4)^{1/3}}{\ln b}$, $c_2 = \frac{-25}{36 \ln b}$, and $c_4 = \frac{200}{\ln b}$ for $K=2829$. Computing the constants:
\begin{align*}
L_{1} &= \left(\frac{125}{24\sqrt{6}\sqrt{A}}\right)^{3/2} \approx 1.94, \\
L_{2} &= \left(\frac{600/\ln b}{b^{-t}/\ln b}\right)^{3/2} = 600^{3/2} b^{3t/2} \approx 14697 b^{3t/2}, \\
L_{3} &= \left(\frac{2 \ln b}{A^{1/3}}\right)^{3/2} \approx 2.4 (\ln b)^{3/2}, \\
L_{4} &= \left(\frac{6 (A/4)^{1/3} / \ln b}{b^{-t}/\ln b}\right)^3 = 54A b^{3t} \approx 64.9 b^{3t}.
\end{align*}
Here, the dominant bounds are $L_2 = O(b^{3t/2})$ and $L_4 = O(b^{3t})$. When $b^t$ is small, $L_2$ is the maximum, but as $b^t$ grows, $L_4$ rapidly overtakes it. Taking the maximum safely bounded by the sum, we find $N_{PL}(f,b) \le 2(L_4 + L_2)$, which gives the final bound:
\begin{equation*}
N_{PL}(f,b) \leq 130 \cdot b^{3t} + 29400 \cdot b^{3t/2}.
\end{equation*}
This formally proves both bounds of Theorem \ref{PartitionDigitthm}.

\section*{Acknowledgments}
The author would like to thank Igor Shparlinski for suggesting the paper \cite{LucaPart2}. This research was supported by the Australian Government Research Training Program (RTP) Scholarship and a top-up scholarship from the University of New South Wales, both of which were instrumental in this work.
\section*{Data availability statement}
Data sharing not applicable to this article as no datasets were generated or analysed during the current study.
\section*{Conflict of Interest}
There is no Conflict of interest.
 
\end{document}